\newcommand{\indep}{\perp \!\!\! \perp}
\providecommand\@dotsep{5}
\renewcommand{\listoftodos}[1][\@todonotes@todolistname]{%
  \@starttoc{tdo}{#1}}
\theoremstyle{plain}
	\newtheorem{theorem}{Theorem}[section]
    \newtheorem*{theorem*}{Theorem}
	\newtheorem{lemma}[theorem]{Lemma}
    \newtheorem{corollary}[theorem]{Corollary}
    \newtheorem*{Question*}{Question}
\theoremstyle{definition}
    \newtheorem{defn}[theorem]{Definition}
    \newtheorem*{defn*}{Definition}
    \newtheorem{example}[theorem]{Example}
    \newtheorem*{example*}{Example}
\theoremstyle{remark}
	\newtheorem*{remark*}{Remark}
\newcommand\SetSymbol[1][]{\nonscript\:#1\vert\allowbreak\nonscript\:\mathopen{}}
\providecommand\given{} 
\DeclarePairedDelimiterX\Set[1]\{\}{\renewcommand\given{\SetSymbol[\delimsize]}#1}
\newcommand{\mc}[1]{\mathcal{#1}}
\newcommand{\res}[2][]{\left.{#2}\right|_{#1}}
\def\R{{\mathbb{R}}}
\def\N{{\mathbb{N}}}
\def\P{{\mathbb{P}}}
\def\C{{\mathbb{C}}}
\def\H{{\mc{H}}}
\def\M{{\mc{M}}}
\def\N{{\mathbb N}}
\DeclareMathOperator{\rank}{rank}
\DeclareMathOperator{\dlog}{dlog}
\DeclareMathOperator{\codim}{codim}
\title{Likelihood Correspondence of Toric Statistical Models}
\author{David Barnhill}
\address{Department of Mathematics, United States Naval Academy, Annapolis, MD}
\email{barnhill@usna.edu}
\author{John Cobb}
\address{Department of Mathematics, Auburn University, Auburn, AL}
\email{jdcobb3@gmail.com}
\author{Matthew Faust}
\address{Department of Mathematics, Michigan State University, East Lansing, MI}
\email{mfaust@msu.edu}
\begin{document}

\maketitle
\vspace{-0.3in}
\begin{abstract}
    Maximum likelihood estimation (MLE) is a fundamental problem in statistics. Characteristics of the MLE problem for discrete algebraic statistical models are reflected in the geometry of the \textit{likelihood correspondence}, a variety that ties together data and their maximum likelihood estimators. We construct this ideal for the large class of toric models and find a Gr\"{o}bner basis in the case of complete and joint independence models arising from multi-way contingency tables. All of our constructions are implemented in \textit{Macaulay2} in a package \texttt{LikelihoodGeometry} along with other tools of use in algebraic statistics. We end with an experimental section using these implementations on several interesting examples.
\end{abstract}


\section{Introduction}

Consider a process resulting in $n+1$ outcomes, each with probability $p_i$ of occurring. Running this process some number of times, we obtain a data vector $u=(u_0, \dots, u_n) \in \N^{n+1}$ where $u_i$ counts the number of trials resulting in outcome $i$. A \textit{discrete statistical model} is a variety $\M$ restricted to the probability simplex $\Delta_n = \Set*{p = (p_0,\dots, p_n) \in \R^{n+1}_{\geq 0} \given \sum_{i=0}^n p_i = 1}$ where the probability of the $i$-th state occurring is given by $p_i$. One method of estimating the set of parameters $p$ most likely to produce the observed $u$ is known as {\em maximum likelihood estimation}.  A maximum likelihood estimate for the parameter $p$ associated with $\M\subseteq\Delta_n$ is found using the likelihood function $\ell_u(p) = \prod_{i=0}^n p_i^{u_i}$. Maximum likelihood estimation (MLE) is a fundamental computational problem in statistics. Research into maximum likelihood estimation has yielded significant innovation in recent years with insights stemming from studying the geometry of algebraic statistical models as varieties \cites{Catanese2004TheML, huh2013maximum,  LikelihoodGeometry,Amendola2019MaximumLE,Maxim2022LogarithmicCB}. 

We make any questions about semi-algebraic sets easier by disregarding inequalities, replacing real numbers with complex numbers, and replacing affine space with projective space. To this end, fix a system of homogeneous coordinates $p_0, p_1, \dots, p_n$ of complex projective space $\P_p^n$ representing the space of probabilities. Similarly, we consider $u$ as coordinates of the data space $\P^n_u$. The likelihood function on $\P_p^n$ is given by
\begin{equation*}
    \ell_u(p) = \frac{p_0^{u_0}p_1^{u_1}\cdots p_n^{u_n}}{(p_0+p_1+\cdots + p_n)^{u_0+u_1+\cdots+u_n}}.
\end{equation*}
We aim to study the MLE problem on the restriction of $\ell_u$ to a given closed irreducible subvariety $\M\subseteq \P_p^n$. Fixing a data vector $u$, maximum likelihood estimates are given by the maxima of $\ell_u$. These maxima belong to the critical set of the log--likelihood function $\log \ell_u$. This critical set is given by $V(\dlog \ell_u) \subset \P^n_p$, where $\dlog \ell_u$ is the gradient of $\log \ell_u$ and $V(\dlog \ell_u)$ denotes its vanishing set. By varying $u$, we can collect such critical sets into a universal family $\mathcal{L}_\M \subset \P_p^n \times \P_u^n$ called the \textit{likelihood correspondence} that encodes all information about the MLE problem \cite[Section 2]{Hosten2004SolvingTL}, such as the {\em maximum likelihood} (ML) degree of the model.
While many results in the literature pertain to information derived from the likelihood ideal such as the ML degree \cites{hauenstein2013maximum, Amndola2017TheML, Michaek2021MaximumLD, amendola2021maximum, Gross2013MaximumLG},  the conditions on $\M$ that induce desirable properties for $\mathcal{L}_\M$ remain largely unknown. For instance, it is still unclear when $\mathcal{L}_\M$ is a complete intersection \cite[page 79]{LikelihoodGeometry}, a class of varieties that can be well understood using standard tools from algebraic geometry. This gap arises from a lack of tools to explicitly construct the vanishing ideal of $\mathcal{L}_\M$, referred to as the \textit{likelihood ideal} of $\M$. Although a general algorithm exists for finding the likelihood ideal \cite[Algorithm 6]{Hosten2004SolvingTL}, as we explain in \S \ref{sec:LikelihoodGeometry}, it is not explicit and proves computationally infeasible in practice. 

In this paper, we give a construction for the likelihood ideal for all toric models (Theorem \ref{thm:LCtoric}). Toric models are a widely used subclass of statistical model which includes all hierarchical log-linear models and undirected graphical models on discrete random variables. As we will show, this construction is substantially faster than previous methods. In the case of complete or joint independence models, Theorem \ref{thm:LCindependence} and Corollary \ref{cor:LCjointindependence} give an explicit Gr\"{o}bner basis. Using explicit implementations of these constructions, we work through several examples in \S \ref{sec:Applications}, and outline the main challenges to generalizing to other classes of models.

\subsection*{Data Availability} All code is available as the \textit{Macaulay2} package, \texttt{LikelihoodGeometry}.\footnote{ \href{https://github.com/johndcobb/LikelihoodGeometry}{https://github.com/johndcobb/LikelihoodGeometry}}

\subsection*{Outline} In \S\ref{sec:Background}, we give background on likelihood geometry and toric models and establish some key facts. In \S\ref{sec:Main}, we use these to prove the main results. Finally, in \S\ref{sec:Applications}, we use these results to compute the likelihood ideal of several interesting examples. 

\section{Background and Notation}\label{sec:Background}

\subsection{Likelihood Geometry}\label{sec:LikelihoodGeometry} A discrete statistical model $\M\subseteq \P^n_p$ gives rise to the likelihood correspondence variety $\mathcal{L}_\M\subseteq \P^n_p \times \P^n_u$, whose geometry reflects aspects of the MLE problem. We restrict our search for maximum likelihood estimates to the smooth locus $\M_{\text{reg}}$ and off the collection of $n+2$ hyperplanes $\H$ consisting of the $n+1$ coordinate hyperplanes $V(p_i)$ and the hyperplane $V(\sum p_i)$. This avoids the singularities coming from $\M$ and from the terms $\log(p_i)$ and $\log(\sum p_i)$ in the log--likelihood function $\log \ell_u$.

\begin{defn} \label{def:likelihoodcorrespondence}
The \textit{likelihood correspondence} $\mathcal{L}_\M$ is the Zariski closure of the set of  pairs $(p,u) \in \P_p^n \times \P_u^n$ such that $p\in \M$ is a critical point for the log--likelihood function. More precisely, 
    \[ \mathcal{L}_\M = \overline{\Set{(p,u) \in (\M_\text{reg} \setminus \H) \times \P_u^n \,:\, \dlog \res[\M]{\ell_u}(p) = 0 }}.\]
The vanishing ideal of $\mathcal{L}_\M$ is called the \textit{likelihood ideal} of $\M$.
\end{defn}

This variety is the universal family of all complex critical points of the likelihood function, from which the positive real points are maximum likelihood estimates on the probability simplex. In 2012, Huh proved that $\mathcal{L}_\M$ is an irreducible variety of dimension $n$ \cite[\S 2]{huh2013maximum}. The projection onto $\P^n_p$ realizes $\mathcal{L}_\M$ as a vector bundle over $\M_{\text{reg}} \setminus \H$ and the projection onto $\P^n_u$ is generically $k$-to-$1$, where $k$ is the maximum likelihood degree (\textit{ML degree}) of the model $\M$. In other words, the ML degree is the number of critical points of the likelihood function on $\M$ given a generic observed $u$ and is seen from the likelihood ideal through its multidegree \cite[page 100]{LikelihoodGeometry}. 
Example \ref{ex:LCofPn} computes the likelihood correspondence for the simplest case where $\M$ is the entire space $\P^n_p$.

\begin{example}\label{ex:LCofPn}
Let $\M=\P_p^n$ and fix $u\in \P^n_u$. We can take the exterior derivative of $\log \ell_u$ directly to give a one-form
\begin{equation*}
    \dlog(\ell_u(p)) = \sum_{i=0}^n \left(\frac{u_i}{p_i} - \frac{u_+}{p_+}\right) dp_i.
\end{equation*}
Here $p_+ = \sum_{i=0}^n p_i$ and $u_+ = \sum_{i=0}^n u_i$. This vector is zero exactly on the single point $p=u \in \P_p^n \setminus \H$. The likelihood correspondence $\mathcal{L}_\M$ has ML degree 1 and is given by the diagonal
$\Delta = \{ (u,u) \in \P^n_p \times \P^n_u \}$. Therefore the likelihood ideal inside the coordinate ring $\mathbb{C}[p_0,\dots, p_n,u_0,\dots, u_n]$ of $\P^n_p \times \P^n_u$ is given by
\begin{equation*}
     I_2 \begin{pmatrix}
        u_0 & \cdots & u_n\\
        p_0 & \cdots & p_n
    \end{pmatrix},
\end{equation*}
where $I_k(M)$ is the ideal of $k\times k$ minors of the matrix $M$.
\end{example}

Even without efficient methods for computing the likelihood ideal explicitly, a substantial literature exists to compute its properties \cites{hauenstein2013maximum, Gross2013MaximumLG, Amndola2017TheML, Michaek2021MaximumLD, amendola2021maximum, lindberg2023maximum}. Many of these results rely upon a general algorithm employing the method of Lagrange multipliers \cite[Algorithm 6]{Hosten2004SolvingTL}. We now describe this algorithm.

Let $I_\M = \langle f_0,\dots, f_r\rangle$ be the prime ideal of the model $\M\subseteq \P^n_p$, and $J(\M)$ be the Jacobian matrix of $I_\M$ of size $(r+1) \times (n+1)$. The following augmented Jacobian has $r+3$ rows and $n+1$ columns:
\[ \widetilde{J}(\M) = \begin{pmatrix}
    u_0 & u_1 & \cdots & u_n \\
    p_0 & p_1 & \cdots & p_n \\
    p_0 \frac{\partial f_0}{\partial p_0} & p_1 \frac{\partial f_0}{\partial p_1} & \cdots & p_n \frac{\partial f_0}{\partial p_n}\\
    p_0 \frac{\partial f_1}{\partial p_0} & p_1 \frac{\partial f_1}{\partial p_1} & \ddots  & p_n \frac{\partial f_1}{\partial p_n}\\
    \vdots & \vdots & \ddots & \vdots\\
    p_0 \frac{\partial f_r}{\partial p_0} & p_1 \frac{\partial f_r}{\partial p_1} & \dots & p_n \frac{\partial f_r}{\partial p_n}
\end{pmatrix} = \begin{pmatrix}
    \begin{pmatrix} u_0 & \cdots & u_n\end{pmatrix}\\
    \begin{pmatrix} p_0 & \cdots & p_n\end{pmatrix}\\
    J(\M)\cdot \text{diag}\begin{pmatrix} p_0 & \cdots & p_n\end{pmatrix}
\end{pmatrix}. \]

As further explained in \cite[Equation (2.3)]{hauenstein2013maximum}, the points in $\M$ with $\rank(\widetilde{J}(\M)) \leq \codim(\M)+1$ are solutions inside $\M$ of the system of equations
\[ \sum_{i=0}^n p_i = 1 \hspace{0.10 in} \text{and}  \hspace{0.10 in} \dlog(\ell_u(p)) = \lambda J(\M), \hspace{0.3in} \lambda \in \mathbb{C}. \] 
By the method of Lagrange multipliers, the ideal $I_\M + I_{\codim(\M)+1}(\widetilde{J}(\M))$ contains the likelihood ideal. However, in practice, the number of minors can be enormous and we must further saturate by the singular locus of $\M$ and the hyperplane arrangement $\H$ to remove extraneous solutions on the boundary. Example \ref{ex:HardyWeinberg} illustrates one case in which this can be computed directly. 

\begin{example}\label{ex:HardyWeinberg}
    Consider the problem of flipping a biased coin two times and recording the number of heads. Let $p=(p_0,p_1,p_2)$ represent the vector of probabilities of getting $0$ heads, $1$ head, and $2$ heads respectively. After repeating this experiment some number of times, we will obtain a corresponding data vector $u=(u_0,u_1, u_2)$. The possible probabilities for this experiment are defined by the \textit{Hardy--Weinberg curve} $\M = V(4p_0p_2-p_1^2) \subset \P_p^2$, a statistical model studied in population genetics with ML degree 1 \cite{Alg_stat_CB}. The likelihood correspondence is a surface $\mathcal{L}_\M \subset \P^2_p \times \P^2_u$ and can be computed using the method described above. The augmented Jacobian is given by
    \begin{equation*} 
    \widetilde{J}(\M) = \begin{pmatrix}
    u_0 & u_1 &  u_2 \\
    p_0 & p_1 & p_2 \\
    4p_0p_2  & -2p_1^2 & 4p_0p_2
\end{pmatrix}.
\end{equation*}
    Since $\M$ is smooth, we only need to saturate by the hyperplane arrangement $\H = V(p_0p_1p_2p_+)$. The likelihood ideal of $\M$ inside the coordinate ring $\C[p_0,p_1,p_2, u_0,u_1,u_2]$ of $\P_p^2 \times \P_u^2$ is exactly
    \[ (\langle 4p_0p_2-p_1^2 \rangle + I_2(\widetilde{J}(\M))): \langle p_0p_1p_2p_+ \rangle^\infty. \]
    In this case, $I_2(\widetilde{J}(\M))$ contributes $9$ generators. After saturation by $\H$, one sees that the likelihood ideal is generated by the following 3 polynomials:
    \begin{equation*}
        4p_{0}p_{2}-p_{1}^{2},\hspace{0.01in}\, 4\,p_{2}u_{0}-p_{1}u_{1}+2\,p_{2}u_{1}-2\,p_{1}u_{2}, \hspace{0.01in} \,2\,p_{1}u_{0}-2\,p_{0}u_{1}+p_{1}u_{1}-4\,p_{0}u_{2}.
    \end{equation*}
    Theorem \ref{thm:LCtoric} provides a faster method for computing the likelihood ideal for toric models such as the Hardy--Weinberg curve. This model will be revisited in Example \ref{ex:HardyWeinberg2}.
\end{example}

Even when the model is smooth and of low dimension, this method can fail to terminate in a reasonable time on a computer cluster (see Example \ref{ex:computationtime}). It is our goal to address this deficiency for the class of toric models.

\subsection{Contingency Tables}\label{sec:ContigencyTables}
In this paper, we consider statistical models analyzing relationships between discrete random variables as represented by {\em contingency tables}. Let $X_1,\dots, X_n$ denote discrete random variables, where $X_i$ takes values (or outcomes) in the set $[d_i] = \{1,2,\dots, d_i\}$. We let $V_1 \times V_2 \times \cdots \times V_n$ represent the vector space of $n$-dimensional contingency tables of format $d_1 \times d_2 \times \cdots \times d_n$. We introduce the indeterminates $p_{i_1\dots i_n}$ to represent the joint probability of event $X_1=i_1$, $X_2=i_2$, \dots, $X_n=i_n$. These indeterminates generate the coordinate ring $\mathbb{C}[p_{i_1\cdots i_n}]$ of homogeneous polynomials for the projective space $\P(V_1\otimes V_2\otimes \cdots \otimes V_n)= \P_p^{D-1}$ of all contingency tables representing possible joint probability distributions, where $D= d_1\cdots d_n$. Similarly, the indeterminates $u_{i_1,\dots, i_n}$ generate the coordinate ring of a second projective space $\P_u^{D-1}$ of contingency tables representing possible joint frequency distributions. In keeping with standard notation, we extend the notation in Example \ref{ex:LCofPn} by using $+$ in the $k$-th place in the subscript of $p_{i_1\dots i_n}$ or $u_{i_1\dots i_n}$ to denote the \textit{marginal sums} over all states of the random variable $X_k$, i.e $p_{+i_2\dots i_n} = \sum_{j=1}^{d_1} p_{j i_2 \dots i_n}$. 

A \textit{conditional independence statement} about $X_1,\dots, X_n$ has the form $A \indep B \,|\, C$ where $A$, $B$, and $C$ are pairwise disjoint subsets of $X= \{X_1,\dots, X_n\}$. In words, $A \indep B \,|\, C$ professes that $A$ is independent of $B$ given $C$. Any such statement gives rise to a model whose vanishing ideal $I_{A \indep B \,|\, C}$ is prime and generated by quadrics \cite[Theorem 8.1, Lemma 8.2]{sturmfels2002solving}. In general, a collection of such statements $A^{(1)} \indep B^{(1)} \, | \, C^{(1)}, \dots, A^{(m)} \indep B^{(m)} \, | \, C^{(m)}$ gives rise to a model 
\[\M = V(I_{A^{(1)} \indep B^{(1)} \, | \, C^{(1)}}) \cap \cdots \cap V(I_{A^{(m)} \indep B^{(m)} \, | \, C^{(m)}}). \] 
The properties that the conditional independence statements impart onto $\M$ are well-studied \cite[Chapter 8]{sturmfels2002solving}. For example, $I_\M$ is a binomial ideal if and only if the statements $A^{(i)} \indep B^{(i)} \, |\,  C^{(i)}$ are \textit{saturated}, i.e. $A^{(i)} \cup B^{(i)} \cup  C^{(i)} = X$ for all $i$. 

\begin{example}\label{ex:2x2Ind}
Let $X_1$ and $X_2$ be binary random variables, represented by the space of $2\times 2$ contingency tables of joint probability and frequency distributions:
\begin{equation*}
    P = \begin{pmatrix}
    p_{11} & p_{12}\\
    p_{21} & p_{22}
\end{pmatrix} \subseteq \P^3_p, \hspace{0.2in} 
 U = \begin{pmatrix}
    u_{11} & u_{12}\\
    u_{21} & u_{22}
\end{pmatrix} \subseteq \P^3_u.
\end{equation*}
Consider the complete independence model given by $\M=V(I_{X_1\indep X_2})$. One can compute that the vanishing ideal of $\M$ is $I_\M = \langle p_{11}p_{22}-p_{12}p_{21}\rangle$. Theorem \ref{thm:LCindependence} says that a Gröbner basis of the likelihood ideal in the coordinate ring $\C[p_{11},p_{12},p_{21},p_{22},u_{11},u_{12},u_{21},u_{22}]$ of $\P^3_p \times \P^3_u$ is
\begin{equation*}
    I_2\begin{pmatrix}
        p_{11} & p_{12} & u_{1+}\\
        p_{21} & p_{22} & u_{2+}
    \end{pmatrix} + 
    I_2\begin{pmatrix}
        p_{11} & p_{21} & u_{+1}\\
        p_{12} & p_{22} & u_{+2}
    \end{pmatrix}.
    \qedhere
\end{equation*}

\end{example}

Now that we have set up our data type, for the remainder of this section we turn to describing the statistical models in play.

\subsection{Toric Models and Sufficient Statistics}\label{sec:toric}

A model $\M$ is \textit{toric} if $I_\M$ is binomial and prime, that is, $\M$ is a toric model. Toric models are among the most thoroughly studied statistical models \cites{on_tor, Alg_stat_CB, sullivant}. All statistical models we work with are of this type; the following inclusions to illustrate how these statistical models relate to each other:

\begin{center}
    Toric
    $\supset$ (Hierarchical) Log--linear $\supset$ Undirected Graphical $\supset$ Independence.
\end{center}

A toric variety $\M\subseteq \P_p^n$ has the following classical combinatorial representation. Fix an $m \times (n+1)$ integer matrix $A=(a_{ij})$ such that all column sums are equal, and let $p$ and $u$ be the vector of indeterminates of the coordinate rings of $\P^n_p$ and $\P^n_u$, respectively. The corresponding toric ideal $I_A$ is generated by all binomials $p^v-p^w$ with $v,w \in \N^{n+1}$ such that $Av = Aw$, where $p^v = p_0^{v_0}p_1^{v_1}\cdots p_n^{v_n}$. Writing $A_i$ for $i$-th column of $A$, the ideal $I_A$ cuts out the variety given by the closure of the image of the map 
\begin{center}
\begin{tikzcd}[row sep= tiny]
    (\C^*)^{m} \ar[r] & \P^n\\
\theta \ar[r, mapsto] & {[\theta^{A_0}: \theta^{A_1}: \dots: \theta^{A_n}]}.
\end{tikzcd}
\end{center}
Substituting this parameterization into the likelihood function on the probability simplex, one sees that the toric model depends only on the vector $Au$:
\[ \ell_u(\theta) = \prod\limits_{i=0}^n(\theta^{A_i})^{u_i} = \theta^{Au}. \]
This is the vector of \textit{minimal sufficient statistics} for $\M$ and is all that is required to solve the MLE problem \cite[Definition 1.1.10]{drton2009}.

\begin{example}
    Consider the $2$-way binary complete independence model given in Example \ref{ex:2x2Ind}. This is an example of a toric model with the corresponding $A$ matrix given by
    \begin{equation*}
    A=\begin{pmatrix}
        1 & 1 & 0 & 0\\
        0 & 0 & 1 & 1\\
        1 & 0 & 1 & 0\\
        0 & 1 & 0 & 1
    \end{pmatrix}.
    \end{equation*}
    The sufficient statistics are exactly the marginal sums: $Au = \begin{pmatrix}
            u_{1+} & u_{2+} & u_{+1} & u_{+2}
        \end{pmatrix}.$
    As  seen in Example \ref{ex:2x2Ind}, the entries of $Au$ are exactly what is needed to compute the likelihood ideal.
\end{example}

\subsection{Log--linear and Undirected Graphical Models}\label{sec:graphical} 
Hierarchical log--linear models are a class of toric models. Let $\mc{X} = [d_1] \times \cdots \times [d_n]$ denote the joint state space of the discrete random variables $X_1,\dots, X_n$. A \textit{hierarchical log--linear model} (or simply \textit{log--linear model}) is defined by a collection $\{G_1,\dots, G_g\}$ of non-empty subsets of $X = \{X_1,\dots, X_n\}$ called \textit{generators}.  
Given such a collection, we can construct the corresponding matrix $A$ by indexing the columns by elements of $\mc{X}$ and indexing the rows by all possible states of the variables in each generator $G_i$. The entries of the matrix $A$ are either $0$ or $1$. An entry of matrix $A$ at column $s \in \mc{X}$ (representing $X_1 = s_1, X_2 = s_2, \dots, X_n = s_n$) and row $X_{i_1} = t_1, X_{i_2}=t_2, \dots, X_{i_m} = t_m$ is 1 if and only if $s_{i_k} = t_k$ for all $1\leq k \leq m$. This is best explored by an example.

\begin{example}\label{ex:3chain}
    Let $X = \{X_1,X_2,X_3\}$ be binary random variables and the generators be $\{X_1,X_2\}$ and $\{X_2,X_3\}$. The corresponding matrix $A$ is:
    \begin{equation*}
        \begin{blockarray}{ccccccccc}
        & 111 & 112 & 121 & 122 & 211 & 212 & 221 & 222\\
        \begin{block}{c(cccccccc)}
            (X_1=1, X_2=1) & 1 & 1 & 0 & 0 & 0 & 0 & 0 & 0\\
            (X_1=1, X_2=2) & 0 & 0 & 1 & 1 & 0 & 0 & 0 & 0\\
            (X_1=2, X_2=1) & 0 & 0 & 0 & 0 & 1 & 1 & 0 & 0\\
            (X_1=2, X_2=2) & 0 & 0 & 0 & 0 & 0 & 0 & 1 & 1\\
            (X_2=1, X_3=1) & 1 & 0 & 0 & 0 & 1 & 0 & 0 & 0\\
            (X_2=1, X_3=2) & 0 & 1 & 0 & 0 & 0 & 1 & 0 & 0\\
            (X_2=2, X_3=1) & 0 & 0 & 1 & 0 & 0 & 0 & 1 & 0\\
            (X_2=2, X_3=2) & 0 & 0 & 0 & 1 & 0 &0  & 0 & 1\\
        \end{block}
        \end{blockarray}\ \ .
        \end{equation*}
    One can compute this model has minimal sufficient statistics $u_{ij+}$ and $u_{+jk}$ and ML degree 1. It is shown in \cite[Table 8.13]{agresti:categorical} that the maximum likelihood estimate is given by
\[\widehat{p}_{ijk}=\frac{u_{ij+}u_{+jk}}{u_{+++}u_{+j+}}. \qedhere \]
\end{example}

A special case of log--linear models are \textit{undirected graphical models}. The undirected graphical model for a graph $\mc{G}$ is the log--linear model on $X$ in which the generators are cliques (maximal complete subgraphs) of $\mc{G}$. Example \ref{ex:3chain} corresponds to the  graph
\begin{figure}[H]
    \centering
\begin{tikzpicture}[node distance={30mm}, thick, main/.style = {draw, circle}]
\node[main] (3) {$X_1$}; 
\node[main] (4) [right of=3] {$X_2$};
\node[main] (5) [right of=4] {$X_3$}; 

\draw [-](3) -- (4);
\draw [-](4) -- (5);

\end{tikzpicture}
\end{figure}
\noindent and the conditional independence statement $X_1 \indep X_3 \, | \, X_2$. It is known that the undirected graphical model of a graph $\mc{G}$ has ML degree 1 if and only if $\mc{G}$ is decomposable -- that is, every cycle of length 4 or more has a chord \cite[Theorem 4.4]{on_tor}.

\subsection{Complete and Joint Independence Models}\label{sec:independence}

The undirected graphical model on $X$ where the graph has no edges is the \textit{complete independence model}, which models pairwise independence between all random variables. Since this graph is decomposable, the ML degree of the complete independence model is 1 and by \cite[Theorem 4.18]{lauritzen1996graphical},  the maximum likelihood estimate has the closed form 
\begin{equation*}
    \widehat{p}_{i_1 \dots i_n} = (\prod_{k=1}^n u_{+\cdots +i_k+\cdots +})/(u_{+\cdots +})^n
\end{equation*} 
and thus the likelihood correspondence has an explicit parametrization given by
\begin{equation}\label{eqn:IndependenceOptimalMLE}
\mathcal{L}_\M = \left\{ \left(\widehat{p}_{i_1,\dots, i_n} = \prod_{k=1}^n u_{+\cdots +i_k+\cdots +}/(u_{+\cdots +})^n, u_{i_1,\dots,i_n}\right) \in \P^{D-1}_p \times \P^{D-1}_u \right\}.
\end{equation} 

The complete independence model $\M \subset \P(V_1\otimes V_2 \otimes \cdots \otimes V_n)$ is the subset of tensors in $X$ that have rank one, that is, all $2$-minors of the generic $d_1\times \cdots \times d_n$ hypermatrix $P = \left( p_{i_1\dots i_n} \right)_{1\leq i_k \leq d_k}$ \cite{grone1977decomposable}. This is exactly the \textit{Segre variety} $\P(V_1) \times \cdots \times \P(V_n)$, whose ideal is the $2\times 2$ minors of the matrices $P_i = V_i \otimes \left( \bigotimes_{j\neq i} V_j\right)$ of size $d_i \times D/d_i$ defined for each random variable $X_i$ \cite[Corollary 1.6.1]{ha2002box}.

\begin{example}\label{ex:234table}
    Consider the complete independence model $\M$ on $2 \times 3 \times 4$ contingency tables $P = \left( p_{ijk} \right)$. The ideal $I_\M$ is generated by the $2\times 2$ minors of the following matrices:
    \[P_1=\begin{pmatrix}
        p_{111} & p_{112} & p_{113} & p_{114} & p_{121} & p_{122} & p_{123} & p_{124} & p_{131} & p_{132} & p_{133} & p_{134} \\
        p_{211} & p_{212} & p_{213} & p_{214} & p_{221} & p_{222} & p_{223} & p_{224} & p_{231} & p_{232} & p_{233} & p_{234} \\
    \end{pmatrix}\]
    \[P_2=\begin{pmatrix}
        p_{111} &p_{112} & p_{113} & p_{114} & p_{211} & p_{212} & p_{213} & p_{214} \\
        p_{121} &p_{122} & p_{123} & p_{124} & p_{221} & p_{222} & p_{223} & p_{224} \\
        p_{131} &p_{132} & p_{133} & p_{134} & p_{231} & p_{232} & p_{233} & p_{234} \\
    \end{pmatrix}\]
    \[P_3=\begin{pmatrix}
        p_{111} &p_{121} & p_{131} & p_{211} & p_{221} & p_{231} \\
        p_{112} &p_{122} & p_{132} & p_{212} & p_{222} & p_{232} \\
        p_{113} &p_{123} & p_{133} & p_{213} & p_{223} & p_{233} \\
        p_{114} &p_{124} & p_{134} & p_{214} & p_{224} & p_{234} \\
    \end{pmatrix}. \qedhere\]
\end{example}

The following lemma collects these facts to give a description of likelihood ideal of complete independence models that will be crucial in defining the likelihood ideal.

\begin{lemma}\label{lemma:independencesegreideal}
    The vanishing ideal $I_\M$ of the complete independence model $\M \subseteq \P_p^{D -1}$ arising from a $d_1\times \cdots \times d_n$ contingency table is $\sum_{i=1}^n I_2(P_i)$. Furthermore, $\M$ is isomorphic to the Segre variety $\P^{d_1-1} \times \cdots \times \P^{d_n-1}$ and therefore is of dimension $\sum_{i=1}^n(d_i-1)$ and degree $(\sum_{i=1}^n (d_i-1))!/\prod_{i=1}^n (d_i-1)!$.
\end{lemma}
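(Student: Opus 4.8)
The plan is to establish the three assertions in turn, using the toric dictionary of \S\ref{sec:toric} to identify $\M$ with a Segre variety and then invoking standard facts about Segre varieties. First I would write down the parametrization. The complete independence model is the graphical model whose generators are the singletons $\{X_1\},\dots,\{X_n\}$, so reading off the matrix $A$ as in \S\ref{sec:graphical} gives rows indexed by the states $(X_k=t)$ and columns indexed by $s=(s_1,\dots,s_n)\in\mc{X}$, with a $1$ in position $((X_k=t),s)$ exactly when $s_k=t$. The toric parametrization $\theta\mapsto\theta^{A_s}$ therefore reads $p_{i_1\dots i_n}=\prod_{k=1}^n\theta^{(k)}_{i_k}$, where $\theta^{(k)}_t$ denotes the coordinate attached to the row $(X_k=t)$. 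This is exactly the Segre embedding $\P^{d_1-1}\times\cdots\times\P^{d_n-1}\hookrightarrow\P^{D-1}$, which is a closed embedding; hence $\M$ is isomorphic to $\P^{d_1-1}\times\cdots\times\P^{d_n-1}$ and coincides with the Segre variety inside $\P^{D-1}$. In particular $\M$ is irreducible of dimension $\sum_{i=1}^n(d_i-1)$.

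For the ideal I would argue that $\sum_{i=1}^n I_2(P_i)$ is the vanishing ideal of this Segre variety. Set-theoretically, the common zero locus of the $2\times 2$ minors of all the single-mode flattenings $P_i$ consists of the hypermatrices all of whose single-mode flattenings have rank at most one; a short induction on $n$ (peeling off a rank-one factor in the first mode and applying the inductive hypothesis to the remaining tensor) shows this locus is precisely the set of rank-one tensors, i.e. the image of the Segre map. Promoting this to the scheme-theoretic equality $I_\M=\sum_{i=1}^n I_2(P_i)$ is exactly the classical statement that the minors of the single-mode flattenings already generate the full prime ideal of the Segre variety, which I would cite rather than reprove \cite{grone1977decomposable, ha2002box}. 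Since $I_\M$ is by definition the vanishing ideal of $\M$, and $\M$ has been identified with the Segre variety, the two ideals agree.

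The degree is then a routine Chow-ring computation on the product. The hyperplane class of $\P^{D-1}$ restricts under the Segre embedding to $H_1+\cdots+H_n$, where $H_i$ is the hyperplane class of the $i$-th factor, so
\[
\deg\M=\int_{\P^{d_1-1}\times\cdots\times\P^{d_n-1}}(H_1+\cdots+H_n)^{\sum_{i=1}^n(d_i-1)}.
\]
Expanding by the multinomial theorem and using $H_i^{d_i}=0$ on $\P^{d_i-1}$ together with $\int H_1^{d_1-1}\cdots H_n^{d_n-1}=1$, the only monomial that survives is $H_1^{d_1-1}\cdots H_n^{d_n-1}$, whose multinomial coefficient $\binom{\sum_i(d_i-1)}{d_1-1,\dots,d_n-1}=(\sum_i(d_i-1))!/\prod_i(d_i-1)!$ is exactly the claimed degree.

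The one step with genuine content is the passage from the set-theoretic to the ideal-theoretic description: that the single-mode flattenings $P_1,\dots,P_n$ alone generate the prime ideal of the Segre variety, with no need for the remaining flattenings or higher-degree relations. I do not expect to reprove this, as it is precisely \cite[Corollary 1.6.1]{ha2002box}; the remaining work is bookkeeping with the toric parametrization and the standard intersection number of a Segre product.
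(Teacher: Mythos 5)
Your proposal is correct and takes essentially the same route as the paper: the paper offers no separate proof of this lemma, instead treating it as a collection of previously cited facts, namely \cite{grone1977decomposable} for the identification of the complete independence model with the rank-one (Segre) locus and \cite[Corollary 1.6.1]{ha2002box} for the statement that the $2\times 2$ minors of the single-mode flattenings $P_i$ generate its prime ideal---exactly the two facts you invoke. Your additional details (matching the toric parametrization with the Segre map, and the multinomial Chow-ring computation giving dimension $\sum_{i=1}^n(d_i-1)$ and degree $(\sum_{i=1}^n(d_i-1))!/\prod_{i=1}^n(d_i-1)!$) are correct, standard arguments filling in the parts the paper regards as classical.
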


More generally, a \textit{joint independence model} on $X$ is the undirected graphical model where every connected component of the associated graph is a complete graph. 
Joint independence models can always be reduced to a complete independence model with fewer variables \cite[page 318]{agresti:categorical}. This identification with complete independence models leads to the following extension of Lemma \ref{lemma:independencesegreideal}.

\begin{lemma}\label{lemma:jointiscomplete}
    Let $\M\subseteq \P^{D-1}_p$ be a joint independence model whose complete graphs are given by a partition $P = \{P_1,\dots,P_q\}$ of the variables $\{X_1,\dots, X_n\}$. Then $\M$ is isomorphic to the product of $q$ projective spaces $\P(\bigotimes_{X_k \in P_i} V_k)$ for each partition $P_i$ under the Segre embedding.
\end{lemma}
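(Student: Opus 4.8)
The plan is to realize the joint independence model on $X_1,\dots,X_n$ as a \emph{complete} independence model on a coarser set of composite random variables, one for each block of the partition, and then to invoke Lemma \ref{lemma:independencesegreideal}. First I would record that since each connected component of the associated graph is a complete graph, its maximal cliques are exactly the blocks $P_1,\dots,P_q$ themselves; hence, as a log--linear model, $\M$ has generators $G_j = P_j$ and no lower--order interaction terms. For each block $P_j$ I introduce a composite variable $Y_j$ whose state space is $\prod_{X_k\in P_j}[d_k]$, so that $Y_j$ has $e_j := \prod_{X_k\in P_j} d_k = \dim\bigl(\bigotimes_{X_k\in P_j} V_k\bigr)$ outcomes. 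Reindexing the tensor factors according to the partition gives a linear isomorphism $\bigotimes_{k=1}^n V_k \cong \bigotimes_{j=1}^q \bigl(\bigotimes_{X_k\in P_j} V_k\bigr)$, hence a coordinate--permuting isomorphism of the ambient projective space $\P^{D-1}_p$.

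The key step is to match the two models under this identification by comparing their toric parameterizations. Writing a global state as $s=(i_1,\dots,i_n)$ and its restriction to block $P_j$ as $t_j = (i_k : X_k\in P_j)$, the log--linear parameterization with generators $P_1,\dots,P_q$ reads
\[ p_{i_1\dots i_n} \;=\; \prod_{j=1}^q \theta^{(j)}_{t_j}. \]
This is precisely the Segre (rank--one tensor) parameterization of the complete independence model on $Y_1,\dots,Y_q$, where $\theta^{(j)}$ ranges over homogeneous coordinates on $\P\bigl(\bigotimes_{X_k\in P_j} V_k\bigr)$. Since $\M$ and the complete independence model on the $Y_j$ are both the Zariski closure of the image of the same monomial map (up to the coordinate permutation above), the two varieties coincide.

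Finally, applying Lemma \ref{lemma:independencesegreideal} to the complete independence model on $Y_1,\dots,Y_q$ identifies $\M$ with the Segre variety $\P^{e_1-1}\times\cdots\times\P^{e_q-1}$, and the identity $\P^{e_j-1} = \P\bigl(\bigotimes_{X_k\in P_j} V_k\bigr)$ then yields the claimed form. I expect the main obstacle to be the bookkeeping of this reduction: verifying that the cliques of a graph whose components are complete are exactly those components, and checking that the reordering isomorphism of tensor factors genuinely carries the parameterization of $\M$ onto the Segre parameterization of the composite model. Once that identification is made precise, everything else follows immediately from Lemma \ref{lemma:independencesegreideal}.
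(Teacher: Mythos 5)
Your proposal is correct and takes essentially the same route as the paper: group each block $P_j$ into a single composite random variable, identify the joint independence model with a complete independence model on these fewer variables, and then apply Lemma \ref{lemma:independencesegreideal} to get the Segre product description. The only difference is that the paper justifies the reduction by citing the statistics literature (Agresti, p.~318), whereas you verify it directly by matching the log--linear parameterization $p_{i_1\dots i_n} = \prod_{j} \theta^{(j)}_{t_j}$ with the Segre parameterization under the tensor-factor regrouping isomorphism --- a self-contained elaboration of the same argument.
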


\begin{example}\label{ex:join_indep}
    Consider discrete random variables $X_1$, $X_2$, and $X_3$ corresponding to the space of $2\times 3 \times 4$ contingency tables. The joint independence model $X_1\indep \{X_2,X_3\}$ is represented by the graph
 \begin{figure}[H]
\centering
\begin{tikzpicture}[node distance={30mm}, thick, main/.style = {draw, circle}]
\node[main] (3) {$X_1$}; 
\node[main] (4) [right of=3] {$X_2$};
\node[main] (5) [right of=4] {$X_3$}; 

\draw [-](4) -- (5);

\end{tikzpicture}.
\end{figure}
\noindent The corresponding $A$ matrix has $14$ rows and $24$ columns. The minimal sufficient statistics are $u_{i++}$ corresponding to the complete graph on $X_1$, and $u_{+jk}$ corresponding to the complete graph on $X_2$ and $X_3$. By considering $X_2$ and $X_3$ as a single random variable $W$ with 12 possible outcomes, this model is isomorphic to the complete independence model $X_1 \indep W$ on the space of $2\times 12$ contingency tables. That is, the corresponding $A$ matrix is the same (albeit with different row and column indexes) and the minimal sufficient statistics are given by $u_{i+}$ and $u_{+i}$.
\end{example}

\section{Computing the Likelihood Correspondence} \label{sec:Main}

With the models of interest defined, we move on to our main results. Theorem \ref{thm:LCtoric} constructs the likelihood ideal of any toric model. Theorem \ref{thm:LCindependence} gives a Grobner basis for the likelihood ideal of complete independence models, and Corollary \ref{cor:LCjointindependence} extends this result to joint independence models using Lemma \ref{lemma:jointiscomplete}. The construction for any toric model is fairly straightforward due to some previous work; The main idea is to use Birch's theorem to give a direct description of the likelihood correspondence on an affine subset, as suggested in \cite[Proposition 3.10]{LikelihoodGeometry}. 

\begin{theorem}\label{thm:LCtoric}
   Let $\M\subseteq \P^n_p$ be a toric model defined by the matrix $A$. Define 
    \[ M = \begin{pmatrix}
        u_0 & p_0\\
        \vdots & \vdots\\
        u_n & p_n
    \end{pmatrix}. \] Then $(I_A + I_2(AM) ): \langle p_0p_1\cdots p_np_+\rangle^\infty $ is the likelihood ideal of $\M$.
\end{theorem}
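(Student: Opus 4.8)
The plan is to split the asserted ideal identity into a set-theoretic claim and a scheme-theoretic claim. Since $\mathcal{L}_\M$ is irreducible of dimension $n$ \cite{huh2013maximum}, its likelihood ideal $I(\mathcal{L}_\M)$ is prime. I would therefore aim to show (a) that $V\big((I_A + I_2(AM)):\langle p_0\cdots p_n p_+\rangle^\infty\big) = \mathcal{L}_\M$, which forces the saturated ideal to have radical $I(\mathcal{L}_\M)$, and (b) that the saturated ideal is in fact radical. Together these give equality with the prime $I(\mathcal{L}_\M)$.

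\textbf{The critical equations.} First I would pin down the critical locus away from $\H$. On the open set $\P^n_p\setminus\H$ every point of $\M$ has all coordinates nonzero, hence lies in the dense torus orbit and is smooth, so the parametrization $\theta\mapsto[\theta^{A_0}:\cdots:\theta^{A_n}]$ applies there and the restriction to $\M_{\text{reg}}$ introduces no extra difficulty. Pulling back the one-form $\dlog \ell_u = \sum_i(u_i/p_i - u_+/p_+)\,dp_i$ along this parametrization and using $\partial p_i/\partial\theta_k = a_{ki}p_i/\theta_k$, the condition $\dlog\res[\M]{\ell_u}(p)=0$ collapses to $(Au)_k = (u_+/p_+)(Ap)_k$ for every row $k$; that is, the vectors $Au$ and $Ap$ are proportional. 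This is exactly the vanishing of all $2\times 2$ minors of $AM = \begin{pmatrix} Au & Ap\end{pmatrix}$, and is the content of Birch's theorem in the form of \cite[Proposition 3.10]{LikelihoodGeometry}. Consequently, on $\P^n_p\setminus\H$ the variety $V(I_A + I_2(AM))$ coincides with the critical locus cutting out $\mathcal{L}_\M$.

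\textbf{Passing to the closure.} Because saturation by $g = p_0\cdots p_n p_+$ computes the Zariski closure of $V(I_A+I_2(AM))$ with the locus $g=0$ removed, the previous step identifies $V\big((I_A+I_2(AM)):\langle g\rangle^\infty\big)$ with $\mathcal{L}_\M$, so the saturated ideal and $I(\mathcal{L}_\M)$ share a radical. For one inclusion I would note that $I_A + I_2(AM)\subseteq I(\mathcal{L}_\M)$: the binomials of $I_A$ vanish since $\mathcal{L}_\M$ projects into $\M$, and the minors of $AM$ vanish on the dense torus part of $\mathcal{L}_\M$, hence on its closure. Since $g$ is a nonzerodivisor modulo the prime $I(\mathcal{L}_\M)$ (as $\mathcal{L}_\M\not\subseteq\H\times\P^n_u$), saturating preserves this inclusion, giving $(I_A+I_2(AM)):\langle g\rangle^\infty\subseteq I(\mathcal{L}_\M)$.

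\textbf{Main obstacle.} The hard part is the reverse inclusion, which is equivalent to proving that the saturated ideal is radical, i.e.\ that $I_A+I_2(AM)$ defines a reduced scheme on the locus $g\neq 0$. I would approach this through Serre's criterion: the determinantal form of $I_2(AM)$ should yield Cohen--Macaulayness (hence $S_1$) in the expected codimension, while the projection to $p$ exhibits $V(I_A+I_2(AM))$ as a family of linear fibers $\{u : Au\parallel Ap\}$ of constant dimension over the smooth torus orbit in $\M$, showing it is generically smooth of dimension $n$ and thus $R_0$. Reducedness then completes the identification with $I(\mathcal{L}_\M)$. The genuinely delicate point is that $AM$ is a \emph{non-generic} matrix of linear forms, so its codimension and Cohen--Macaulayness must be verified directly rather than imported from the generic determinantal theory.
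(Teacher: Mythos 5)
Your opening steps coincide with the paper's proof and are correct: Birch's theorem identifies $V(I_2(AM))$ with the critical locus of $\log\ell_u$ on $(\M\setminus\H)\times\P^n_u$ (the minors vanish exactly when $Au$ and $Ap$ are proportional); saturation by $g=p_0\cdots p_np_+$ computes the closure of the part of $V(I_A+I_2(AM))$ lying off $V(g)$; and the inclusion $(I_A+I_2(AM)):\langle g\rangle^\infty\subseteq I(\mathcal{L}_\M)$ follows because the generators vanish on a dense subset of the irreducible $\mathcal{L}_\M$ and $g\notin I(\mathcal{L}_\M)$.

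The genuine gap is the step you yourself flag as the main obstacle: radicality of the saturated ideal is never proved, and the route you propose is the wrong tool. Global Cohen--Macaulayness (or even correct codimension) of $I_A+I_2(AM)$ for the non-generic matrix $AM$ is exactly what one cannot import from determinantal theory, and it is also not needed. Since the ideal has been saturated at $g$, radicality of $(I_A+I_2(AM)):\langle g\rangle^\infty$ is equivalent to reducedness of the scheme $V(I_A+I_2(AM))$ \emph{only on the open set} $\{g\neq 0\}$, and there the determinantal structure collapses to a linear one: because the column sums of $A$ are equal, the all-ones vector lies in the row span of $A$, so $Ap\neq 0$ whenever $p_+\neq 0$; after inverting an entry $(Ap)_j$, every $2\times 2$ minor of $\begin{pmatrix} Au & Ap\end{pmatrix}$ lies in the ideal generated by the forms $\ell_i=(Ap)_j(Au)_i-(Ap)_i(Au)_j$, which are linear in the $u$-variables with coefficient matrix of constant rank $\rank(A)-1$ at every point of $\M\setminus\H$. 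Hence over the dense torus of $\M$ the scheme is an affine-linear bundle over an integral base, so it is reduced (indeed integral) there, which is precisely what radicality of the saturated ideal requires; two radical ideals with the same vanishing locus then coincide. You actually have this bundle picture in hand, but you use it only to get $R_0$ and then reach for $S_1$ via Cohen--Macaulayness globally; the fix is to localize at $g$ first and use the constant-rank linearization fully, which is in substance how the paper argues when it says that any non-reduced structure of $V(I_A+I_2(AM))$ is supported on $\H$ and is removed by the saturation.
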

\begin{proof}
    Let  $I= (I_A + I_2(AM) ): \langle p_0p_1\cdots p_np_+\rangle^\infty$. First, we show that $\mathcal{L}_\M$ agrees with $V(I)$ after restricting to $(\P_p^n \setminus \H) \times \P^n_u$, or equivalently, that $V(I_2(AM))$ agrees with $\mathcal{L}_\M$ on $(\M \setminus \H) \times \P^n_u$. Note that a point $(p,u)$ is in $V(I_2(AM))$ whenever $AM = \begin{pmatrix} Au & Ap \end{pmatrix}$ drops rank, or at all points $(p,u)$ such that $Ap$ is proportional to the sufficient statistic $Au$. By Birch's theorem \cite[Theorem 1.10]{Alg_stat_CB}, these are exactly the maximum likelihood estimates for $u$ and therefore define $\mathcal{L}_\M$. Since $\mathcal{L}_\M$ agrees with $V(I)$ on $(\P_p^n \setminus \H) \times \P^n_u$, any non-reduced structure of $V(I)$ is supported on $\mathcal{H}$ since $\mathcal{L}_\M$ is prime. Since these hyperplanes are specifically saturated out from $I$, it must be radical. Since both the likelihood ideal and $I$ are radical and equal on a dense open set, $V(I)$ and $\mathcal{L}_\M$ are also equal after taking closure.
\end{proof}

As will be shown experimentally in \S \ref{sec:Applications}, this can be much faster than the Lagrange multipliers method due to only requiring  $2 \times 2$ minors of a single $m \times 2$ matrix. For complete and joint independence models, we improve upon Theorem \ref{thm:LCtoric} by directly giving a Gröbner basis.

Consider the complete independence model $\M \subset \P_p^{D-1}$ inside the projective space on $V_1\times \cdots \times V_n$, the vector space of all $d_1 \times \cdots \times d_n$ contingency tables coming from discrete random variables $X_1,\dots, X_n$. We will show that the likelihood ideal of $\M$ is generated by $2\times 2$ minors of matrices obtained by augmenting $P_i$, the matrices in Lemma \ref{lemma:independencesegreideal}. Let $M_i$ be the matrix obtained from $P_i$ by appending the column of marginal sums over all states of $X_i$ (see Example \ref{ex:234Ms}). Theorem \ref{thm:LCindependence} and Corollary \ref{cor:LCjointindependence} show that the Gröbner basis of the likelihood ideal of complete and joint independence models come from $2\times 2$ minors of $M_i$.

\begin{theorem}\label{thm:LCindependence}
    The ideal $I=\sum_{i=1}^n I_2(M_i)$ is the prime likelihood ideal of the complete independence model $\M$. Furthermore, the generators of $I$ are a Gr\"{o}bner basis under the lexicographic term order for $p$'s followed by the lexicographic term order for the $u$'s.
\end{theorem}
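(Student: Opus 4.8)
The plan is to establish the two assertions---that $I=\sum_{i=1}^{n}I_2(M_i)$ is the likelihood ideal $\mathcal{I}(\mathcal{L}_\M)$ and that its defining minors are a Gröbner basis---by first squeezing $I$ between two inclusions that identify it with $\mathcal{I}(\mathcal{L}_\M)$ \emph{up to saturation}, and then using the Gröbner basis to show that no saturation is actually needed. Throughout I write $p^{(i)}_a$ and $u^{(i)}_a$ for the marginals over all variables except $X_i$, evaluated at $X_i=a$, so that the appended column of $M_i$ is $(u^{(i)}_a)_a$.

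For the inclusion $I\subseteq\mathcal{I}(\mathcal{L}_\M)$ I would substitute the explicit parametrization \eqref{eqn:IndependenceOptimalMLE} into each generator. A pure minor of $M_i$ (two $p$-columns) lies in $I_\M=\sum_i I_2(P_i)$ and vanishes because $\widehat{p}\in\M$ by Lemma~\ref{lemma:independencesegreideal}; a mixed minor $p_{(a,\hat s)}u^{(i)}_b-p_{(b,\hat s)}u^{(i)}_a$ vanishes by direct substitution, since both terms equal $u^{(i)}_a u^{(i)}_b\prod_{k\neq i}u_{\cdots\hat s_k\cdots}/(u_{+\cdots+})^{n}$. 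For the reverse direction I would invoke Theorem~\ref{thm:LCtoric}, which presents the likelihood ideal as $(I_A+I_2(AM)):\langle p_0\cdots p_n p_+\rangle^{\infty}$, and show $I_A+I_2(AM)\subseteq I$. The containment $I_A=I_\M\subseteq I$ is immediate. Each generator of $I_2(AM)$ is a marginal relation $u^{(i)}_a p^{(j)}_b-u^{(j)}_b p^{(i)}_a$; summing the mixed minors of $M_i$ over the columns $\hat s$ yields the within-variable relations $u^{(i)}_a p^{(i)}_b-u^{(i)}_b p^{(i)}_a\in I$, and the cross-variable relations ($i\neq j$) then reduce to $0$ modulo $I$ using the conservation identity $\sum_a u^{(i)}_a=u_{+\cdots+}=\sum_b u^{(j)}_b$. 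These two inclusions give $\operatorname{sat}_q(I)=\mathcal{I}(\mathcal{L}_\M)$, where $q=p_0\cdots p_n p_+$.

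The Gröbner basis is the technical core. Under the block order (lex on the $p$'s, then lex on the $u$'s), the leading term of a pure minor is a squarefree quadratic $p_\alpha p_\gamma$, and the leading term of a mixed minor is $p_\alpha u_\beta$, the product of a $p$-variable with the single lex-largest $u$-variable appearing in the relevant marginal; in particular every leading term is a squarefree quadratic monomial. I would then verify Buchberger's criterion by cases. Pure--pure S-pairs reduce by the classical quadratic Gröbner basis of the Segre ideal $\sum_i I_2(P_i)$; pure--mixed and mixed--mixed S-pairs supported on a single $M_i$ reduce by the one-matrix determinantal straightening. The essential case is a mixed--mixed S-pair between $M_i$ and $M_j$ with $i\neq j$ whose leading terms share a $p$-variable $p_\alpha$: its S-polynomial has the form $-p_\gamma u^{(j)}_{\bullet}u^{(i)}_{\bullet}+p_\delta u^{(i)}_{\bullet}u^{(j)}_{\bullet}$, where $\gamma$ and $\delta$ are obtained from $\alpha$ by shifting the $i$- and $j$-slots, and it straightens to $0$ by introducing the index $\varepsilon$ shifted in \emph{both} slots and reducing through the mixed minors relating $p_\gamma,p_\varepsilon$ (in $M_j$) and $p_\delta,p_\varepsilon$ (in $M_i$), together with the telescoping of the marginal sums.

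Finally I would combine the parts. The Gröbner basis shows $\operatorname{in}(I)$ is a squarefree monomial ideal, so $I$ is radical, and it lets me read off $\dim V(I)=D-1=\dim\mathcal{L}_\M$, where irreducibility and dimension of $\mathcal{L}_\M$ come from \cite{huh2013maximum}. Since $\mathcal{L}_\M\subseteq V(I)$ is irreducible of top dimension it is a component of $V(I)$, and the Birch-theoretic reading of the rank-one conditions \cite{Alg_stat_CB} shows $V(I)$ coincides with $\mathcal{L}_\M$ away from $\H$, so any other component is confined to $\H$. Ruling these out amounts to $\operatorname{sat}_q(I)=I$, equivalently that no minimal prime of the radical ideal $I$ contains $q$; I would settle this by matching the multigraded Hilbert series of $S/\operatorname{in}(I)$ with that of $\mathcal{L}_\M$, forcing $V(I)=\mathcal{L}_\M$ and hence $I=\mathcal{I}(\mathcal{L}_\M)$, which is prime. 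I expect the two genuine obstacles to be this last step---certifying that no spurious component is supported on the hyperplane arrangement, so that the saturation in Theorem~\ref{thm:LCtoric} can be dropped---and the cross-matrix straightening reductions in the Gröbner computation.
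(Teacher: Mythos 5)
Your architecture---squeeze $I$ between $I\subseteq I(\mathcal{L}_\M)$ and $I_A+I_2(AM)\subseteq I$, so that $I:\langle q\rangle^\infty=I(\mathcal{L}_\M)$ for $q=p_0\cdots p_np_+$ by Theorem~\ref{thm:LCtoric}, and then argue the saturation is superfluous---is sound up to its last step, and your verification that $I_2(AM)\subseteq I$ (summing the mixed minors of $M_i$ over the $p$-columns to get the within-variable relations, then killing the cross-variable relations with $\sum_a u^{(i)}_a=u_{+\cdots+}$) is correct and is a genuinely different ingredient: the paper never invokes Theorem~\ref{thm:LCtoric} in this proof. The first gap is that the Gr\"obner basis claim, which is half the theorem and which your endgame also leans on (radicality, dimension), is only sketched: you propose Buchberger's criterion ``by cases'' and leave the essential cross-matrix mixed--mixed S-pairs as an acknowledged obstacle, with only a heuristic straightening outline. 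The paper closes this without any S-pair computation: the block order is a diagonal order on each $M_i$, so the minors of $M_i$ are a Gr\"obner basis of $I_2(M_i)$ by \cite{sturmfels1990grobner}, and the union of these bases is a Gr\"obner basis of the sum by \cite[Lemma 1.3(c)]{conca1996gorenstein}. Relatedly, you compress the dimension count to ``read off $\dim V(I)=D-1$''; this is actually a nontrivial vertex-cover computation (the paper's Lemma~\ref{lemma:dimI}).

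The second gap is the one that would make the argument fail as written. You must show $I=I:\langle q\rangle^\infty$, i.e.\ that no minimal prime of the radical ideal $I$ contains $q$, equivalently no component of $V(I)$ lies inside $\H\times\P^{D-1}_u$. Your proposed certificate---matching the multigraded Hilbert series of $S/\operatorname{in}_<(I)$ with that of $\mathcal{L}_\M$---is not available: the Hilbert series of $I(\mathcal{L}_\M)$ is exactly the unknown quantity (its only descriptions are the saturation you are trying to show is unnecessary, or the parametrization \eqref{eqn:IndependenceOptimalMLE}, whose image's Hilbert series you have no independent handle on). Even granting a match of multidegrees, these detect only top-dimensional components, so spurious lower-dimensional components supported on $\H$ would pass the test unless you additionally prove $V(I)$ is equidimensional (say via Cohen--Macaulayness of $\operatorname{in}_<(I)$), which you do not address. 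The missing idea is the paper's primeness argument: since $I\supseteq I_\M$, we have $V(I)\subseteq\M\times\P^{D-1}_u$, the fibers of the projection to $\M$ are linear in $u$, and the transitive action of $PGL(V_1)\times\cdots\times PGL(V_n)$ on the Segre variety reduces the fiber-dimension count to the single point $p_{1\cdots1}=1$, where one checks the fiber has codimension exactly $\dim\M$ in $V(I)$. Constant linear fibers over an irreducible base make $V(I)$ irreducible, hence $I$ prime (being radical); then $I\subseteq I(\mathcal{L}_\M)$ plus equality of dimensions forces $I=I(\mathcal{L}_\M)$. Without a primeness or unmixedness argument of this kind, your proposal cannot conclude.
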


\begin{example}\label{ex:234Ms}
    Consider the complete independence model $\M$ from Example \ref{ex:234table}. By augmenting each $P_i$ accordingly we get:
    \[M_1=\begin{pmatrix}
        p_{111} & p_{112} & p_{113} & p_{114} & p_{121} & p_{122} & p_{123} & p_{124} & p_{131} & p_{132} & p_{133} & p_{134} & u_{1++}\\
        p_{211} & p_{212} & p_{213} & p_{214} & p_{221} & p_{222} & p_{223} & p_{224} & p_{231} & p_{232} & p_{233} & p_{234} & u_{2++}
    \end{pmatrix}\]
    \[M_2=\begin{pmatrix}
        p_{111} &p_{112} & p_{113} & p_{114} & p_{211} & p_{212} & p_{213} & p_{214} & u_{+1+} \\
        p_{121} &p_{122} & p_{123} & p_{124} & p_{221} & p_{222} & p_{223} & p_{224} & u_{+2+}\\
        p_{131} &p_{132} & p_{133} & p_{134} & p_{231} & p_{232} & p_{233} & p_{234} & u_{+3+}
    \end{pmatrix}\]
    \[M_3=\begin{pmatrix}
        p_{111} &p_{121} & p_{131} & p_{211} & p_{221} & p_{231} & u_{++1} \\
        p_{112} &p_{122} & p_{132} & p_{212} & p_{222} & p_{232} & u_{++2} \\
        p_{113} &p_{123} & p_{133} & p_{213} & p_{223} & p_{233} & u_{++3} \\
        p_{114} &p_{124} & p_{134} & p_{214} & p_{224} & p_{234} & u_{++4} 
    \end{pmatrix}.\]
    The likelihood ideal of $\M$ is generated by the $2\times 2$ minors of these matrices. This is a Gr\"obner basis under the term order $p_{111} > \cdots > p_{234} > u_{111} > \cdots > u_{234}$.
\end{example}

\begin{proof}[Proof of Theorem \ref{thm:LCindependence}]
    We start by giving a Grobner basis of $I$. The given term order $<$ induces a diagonal term order upon each matrix $M_i$, that is, it respects the rows and columns of each matrix $M_i$. By \cite[Theorem 1]{sturmfels1990grobner}, the ideal of $2\times 2$ minors of $M_i$ forms a (reduced) Gr\"{o}bner basis for each ideal $I_2(M_i)$. 
    By \cite[Lemma 1.3(c)]{conca1996gorenstein}, the Gr\"{o}bner basis of the sum $\sum_{k=1}^n I_2(M_k)$ is obtained by taking the union of the Gr\"{o}bner bases of each term. Therefore, the set of $2\times 2$ minors of $M_i$ for all $i$ forms a Gr\"{o}bner basis with respect to $<$. 
    
    To show that $I=I(\mathcal{L}_\mathcal{M})$, 
it suffices to show $I\subseteq I(\mathcal{L}_\mathcal{M})$, $\dim(I) = \dim(I(\mathcal{L}_\mathcal{M}))$, and that $I$ is prime. Since plugging the parametrization (\ref{eqn:IndependenceOptimalMLE}) into $M_i$ causes all the $2\times 2$ minors to vanish, we have $I\subseteq I(\mathcal{L}_\mathcal{M})$. In Lemma \ref{lemma:dimI}, we compute $\dim(I)$ directly using the Gr\"{o}bner basis above and a known formula for computing the dimension of monomial ideals.\footnote{Alternatively, one can prove $V(I)=\mathcal{L}_\M$ on $(\P_p^{D-1} \setminus \mathcal{H}) \times  \P_u^{D-1}$ by using the parametric description of $\M$.} It only remains to show that $I$ is prime. 
 
    Since the Grobner basis is square-free, $I$ is radical. To show that $I$ is prime, we show that $V(I)$ is a vector bundle over $\M$ which is prime. It suffices to show that the fiber over any point in $\M$ has codimension $\dim \M = \sum_{i=1}^n(d_i-1)$ inside $V(I)$. Since $\M \cong \P(V_1) \times \cdots \times \P(V_n)$ comes with a transitive group action by the product of projective general linear groups $PGL(V_1) \times \cdots \times PGL(V_n)$, we may check the fiber dimension over the single point $p_{1,\dots, 1} = 1$ and all other $p_{i_1,\dots, i_n} = 0$. The codimension of this fiber is then equal to the dimension of the ideal of $2\times 2$ minors of all $M_i$ after plugging in this single point. From each matrix $M_i$, we get exactly $d_i-1$ nonzero $2\times 2$ minors for a total of $\sum_{i=1}^n (d_i - 1)$ generators. We can check these generate an ideal whose height is $\sum_{i=1}^n (d_i - 1)$, for example by passing to the initial ideal 
    \begin{equation*}
        \{ u_{i_1,\dots, i_n} \, | \, i_k = 1 \text{ for all but one index } k \},
    \end{equation*} 
    which is just the intersection of $\sum_{i=1}^n (d_i - 1)$ coordinate hyperplanes.
\end{proof}

\begin{lemma}\label{lemma:dimI}
 Let $I$ be as in Theorem \ref{thm:LCindependence}. The initial ideal $\text{in}_<(I)$ has dimension $D-1$ as a projective variety.
\end{lemma}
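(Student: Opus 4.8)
The plan is to compute the dimension of the projective variety $V(\text{in}_<(I))$ directly by analyzing its initial ideal, which was already identified in the proof of Theorem \ref{thm:LCindependence} as a squarefree monomial ideal. First I would recall that the Gr\"{o}bner basis of $I$ consists of the $2\times 2$ minors of all the matrices $M_i$, and that under the given diagonal term order the leading term of each minor is the product of the two diagonal entries. Since the last column of each $M_i$ consists of marginal sums of the $u$'s, the minors of $M_i$ split into two types: those using only columns of $P_i$ (whose leading terms are products $p_\alpha p_\beta$ of two distinct $p$-variables) and those using the appended $u$-column (whose leading terms are products $p_\alpha u_\gamma$ of a $p$-variable with a $u$-variable). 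The monomial generators of $\text{in}_<(I)$ are therefore all squarefree of degree two.

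The key step is to determine the Krull dimension of the polynomial ring modulo this monomial ideal, equivalently the maximum size of a set of variables on which no generator is supported — that is, the largest subset $S$ of the $2D$ variables $\{p_{i_1\dots i_n}, u_{i_1\dots i_n}\}$ such that $S$ contains no leading monomial as a product of two of its elements. I would translate this into a combinatorial independent-set computation: a monomial generator $p_\alpha p_\beta$ forbids $S$ from containing both $p_\alpha$ and $p_\beta$ when they lie in a common row-block of some $P_i$, and $p_\alpha u_\gamma$ forbids containing that particular pair. Using the explicit description of which diagonal products appear, I expect the optimal $S$ to consist of all the $u$-variables together with a carefully chosen collection of $p$-variables of size equal to $\dim \M = \sum_{i=1}^n (d_i - 1)$, or some equivalent reformulation, giving an affine dimension of $D + \sum_{i=1}^n(d_i-1) - (\text{overlap})$. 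The bookkeeping should reconcile with the fiber-bundle picture from the theorem's proof: $V(I)$ is a bundle over $\M$ of relative dimension $D - 1 - \dim\M$, so the affine cone has dimension $D + \sum(d_i-1) - (\sum(d_i-1)) = D$, hence projective dimension $D-1$.

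Concretely, I would invoke the standard formula that for a squarefree monomial ideal the Krull dimension of the quotient equals the number of variables minus the minimal size of a vertex cover of the associated hypergraph (equivalently, the maximal size of a coclique), and then exhibit both a large independent set of variables and a matching collection of generators certifying that this set is maximal. The main obstacle I anticipate is the combinatorial accounting across the overlapping variable structure: each $p$-variable $p_{i_1\dots i_n}$ appears in every matrix $M_i$ (in different positions), so the constraints from the various $M_i$ interact, and I must ensure the chosen independent set simultaneously avoids all $n$ families of row-block constraints on the $p$-variables while freely retaining the $u$-variables that are only ever paired with $p$'s. Verifying that no larger independent set exists — i.e.\ the lower bound on dimension is tight — is the delicate part, and I would handle it either by the explicit minimal prime computation (intersecting the $\sum_{i=1}^n(d_i-1)$ coordinate hyperplanes as indicated at the end of the previous proof) or by directly matching the count against the known dimension $\dim \mathcal{L}_\M = D-1$ guaranteed by Huh's theorem, which forces the answer and serves as an independent check.
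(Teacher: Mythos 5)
Your framework---computing the dimension of the squarefree monomial ideal $\text{in}_<(I)$ as the maximum number of variables supporting no generator---is just the dual formulation of the paper's computation, which finds the minimum vertex cover (equal to the codimension); these are the same combinatorics. The genuine gap is that you never carry out the one step that constitutes the lemma's actual content, namely the upper bound $\dim V(\text{in}_<(I)) \leq D-1$, and neither of your proposed fallbacks can deliver it. Appealing to Huh's theorem is circular here: since $I \subseteq I(\mathcal{L}_\M)$ is established before this lemma is invoked, Huh's theorem only yields $\mathcal{L}_\M \subseteq V(I)$ and hence the lower bound $\dim V(\text{in}_<(I)) = \dim V(I) \geq D-1$, which is free; it cannot rule out that $V(I)$ is strictly larger. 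The entire reason Lemma \ref{lemma:dimI} exists is that Theorem \ref{thm:LCindependence} needs $\dim(I) = \dim(I(\mathcal{L}_\M))$ as an \emph{independent} input, so that two primes of equal dimension, one contained in the other, are forced to coincide; using $\dim \mathcal{L}_\M$ to ``force the answer'' assumes the very conclusion being proved. Your other fallback---the coordinate-hyperplane computation at the end of the proof of Theorem \ref{thm:LCindependence}---computes the codimension of the fiber of $V(I) \to \M$ over one special point (an ingredient of the primeness argument), not the dimension of $V(\text{in}_<(I))$, so it cannot substitute. What is actually needed, and what the paper supplies, is a lower bound valid for \emph{every} vertex cover $S$: any such $S$ must spend $\codim(\M) = D-1-\sum_{i=1}^n(d_i-1)$ variables covering $\text{in}_<(I_\M)$, and the residual generators decompose into $n$ bipartite ``staircase'' graphs $Q_i$ on pairwise disjoint variable sets, each demanding $d_i-1$ further vertices, for a total of $D-1$.

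There is also a concrete error in your candidate independent set. Under the given order, the minor of $M_i$ on rows $r<s$, a column $c$ of $P_i$, and the appended marginal column has leading monomial equal to the $(r,c)$ entry of $P_i$ times $u_{1\cdots 1 s 1 \cdots 1}$ (the lex-leading variable of the marginal sum, as in the paper's bipartite graphs). Taking $s = d_i$ shows that every $p$-variable whose $i$-th index is less than $d_i$, for any $i$, appears in a generator together with a $u$-variable. Hence an independent set containing all $D$ of the $u$-variables can contain at most \emph{one} $p$-variable, namely $p_{d_1\cdots d_n}$---not $\sum_{i=1}^n(d_i-1)$ of them. The correct maximum is $D+1$ variables, giving affine dimension $D+1$ and multiprojective dimension $(D+1)-2 = D-1$, consistent with the minimal vertex cover $2D-(D+1) = D-1$. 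Your reconciliation (``the affine cone has dimension $D$, hence projective dimension $D-1$'') reaches the right number only through compensating off-by-one errors: the cone over a $(D-1)$-dimensional subvariety of $\P^{D-1}_p \times \P^{D-1}_u$ has dimension $D+1$, and one subtracts $2$, not $1$.
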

\begin{proof}
    Split off the $2 \times 2$ minors of $I$ coming from the submatrix $P_i$ from the $2\times 2$ minors that involve terms with $u$: 
    \begin{align*}
        I &= \sum_{i=1}^n I_2(P_i) + \sum_{i=1}^n \sum_{k=1}^{D/d_i} I_2(M_i\cdot(e_k+e_{1+D/d_i})))\\
        &= I_\M + \sum_{i=1}^n \sum_{k=1}^{D/d_i} I_2(M_i\cdot(e_k+e_{1+D/d_i}))).
    \end{align*}
    Here, $e_k$ refers to the standard basis vector with $1$ in the $k$-th entry and $0$ elsewhere, and the second term simply collects the $2\times 2$ minors coming from  pairing each column in $M_i$ with the last column of minimal sufficient statistics. The codimension of the vanishing of a monomial ideal such as $V(\text{in}_<(I))$ as a projective variety is the minimum cardinality of a subset $S$ of the variables of $\C[p_{i_1\dots i_n},u_{k_1\dots k_n}]$ such that at least one of the variables in $S$ shows up in each monomial generator of $\text{in}_<(I)$  \cite[Chapter 9]{cox1994ideals}. Since $I_\M \subset I$, $|S|$ must be at minimum $\codim(\M) = D - 1 - \sum_{i=1}^n (d_i-1)$ to cut out $\text{in}_<(I_\M)$. That is, $S$ must contain a subset $J$ of the variables in the hypermatrix $P=\left( p_{i_1 \dots i_n} \right)_{1\leq i_k \leq d_k}$ of size $|J| = D - 1 - \sum_{i=1}^n (d_i-1)$ that covers the $2\times 2$ minors of variables coming from $\text{in}_<(I_\M)$. Since the ideal of $2\times 2$ minors does not depend on permuting the dimensions of $P$, without loss of generality we can let $J$ be the subset 
    \begin{equation*}
        \{ p_{i_1 \dots i_n} \, | \, i_j < d_j, i_k < d_k \text{ for at least two indices } j \text{ and } k \}.
    \end{equation*} 
    With $J\subset S$, the only monomials in $\text{in}_<(I)$ containing variables not present in $S$ are exactly coming from the $2\times 2$ minors of the last two columns of each $M_i$:
    \begin{equation*}
        Q_i = \text{in}_<\left(I_2(M_i\cdot (e_{D/d_i} + e_{1+D/d_i}))\right).
    \end{equation*}
    Furthermore, the variables involved in the generators of $Q_i$ are disjoint to those of $Q_j$ for $i \neq j$. The ideal $Q_i$ is exactly the edge ideal coming from the following bipartite graph:

\begin{figure}[H]
    \centering
\scalebox{0.75}{\begin{tikzpicture}[node distance={30mm}, thick, main/.style = {circle}]
\node (3) at (0,0) {$p_{d_1\dots d_{i-1} 1  d_{i+1} \dots d_n}$}; 
\node (4) at (0,-1) {$p_{d_1\dots d_{i-1} 2  d_{i+1} \dots d_n}$}; 
\node (5) at (0,-2) {$p_{d_1\dots d_{i-1} 3  d_{i+1} \dots d_n}$};
\node (6) at (0,-3) {$\vdots$};
\node (7) at (0,-4) {$p_{d_1\dots d_{i-1} (d_i-2)  d_{i+1} \dots d_n}$};
\node (8) at (0,-5) {$p_{d_1\dots d_{i-1} (d_i-1)  d_{i+1} \dots d_n}$};
\node (9) at (8,-0.07) {$u_{1\dots 1 2  1 \dots 1}$};
\node (10) at (8,-1.07) {$u_{1\dots 1 3  1 \dots 1}$};
\node (11) at (8,-2.07) {$u_{1\dots 1 4  1 \dots 1}$};
\node (12) at (8,-3) {$\vdots$};
\node (13) at (8,-4.07) {$u_{1\dots 1 (d_i-1)  1 \dots 1}$};
\node (14) at (8,-5.07) {$u_{1\dots 1 (d_i)  1 \dots 1}$};

\draw [-](3) to [out=0,in=180] (9);
\draw [-](3) to [out=0,in=180,looseness=0] (10);
\draw [-](3) to [out=0,in=180,looseness=0] (11);
\draw [-](3) to [out=0,in=180,looseness=0] (13);
\draw [-](3) to [out=0,in=180,looseness=0] (14);
\draw [-](4) to [out=0,in=180] (10);
\draw [-](4) to [out=0,in=180,looseness=0] (11);
\draw [-](4) to [out=0,in=180,looseness=0] (13);
\draw [-](4) to [out=0,in=180,looseness=0] (14);
\draw [-](5) to [out=0,in=180,looseness=0] (11);
\draw [-](5) to [out=0,in=180,looseness=0] (13);
\draw [-](5) to [out=0,in=180,looseness=0] (14);
\draw [-](7) to [out=0,in=180,looseness=0] (13);
\draw [-](7) to [out=0,in=180,looseness=0] (14);
\draw [-](8) to [out=0,in=180,looseness=0] (14);
\end{tikzpicture}}
\end{figure}
    
    The variables we must add to $S$ are exactly a minimal set of nodes of the above graph such that deleting them removes all the edges. For the graph above, we must remove $d_i-1$ nodes; for example, all the nodes on the left. Doing this for each $Q_i$ adds $\sum_{i=1}^n(d_i-1)$ new variables to $S$ so that $|S| = D-1$. This means that the codimension of $V(\text{in}_<(I))$ as a projective variety is $D-1$, as desired.
\end{proof}

Since joint independence models can be represented as a complete independence model with fewer variables, using Lemma \ref{lemma:jointiscomplete} we can extend Theorem \ref{thm:LCindependence} to joint independence models. See Example \ref{ex:join_dep2} for an example doing this explicitly.

\begin{corollary}\label{cor:LCjointindependence}
    Theorem \ref{thm:LCindependence} holds for joint independence models.
\end{corollary}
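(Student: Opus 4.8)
The plan is to reduce the joint independence case to the complete independence case already handled by Theorem \ref{thm:LCindependence}, using the structural identification provided by Lemma \ref{lemma:jointiscomplete}. The key observation is that a joint independence model given by a partition $P = \{P_1,\dots, P_q\}$ of $\{X_1,\dots,X_n\}$ is isomorphic, under the Segre embedding, to the product $\prod_{i=1}^q \P(\bigotimes_{X_k \in P_i} V_k)$. If I regroup the variables by treating each block $P_i$ of the partition as a single composite random variable $W_i$ whose state space is $\prod_{X_k \in P_i}[d_k]$ (with $\prod_{X_k \in P_i} d_k$ outcomes), then the joint independence model on $X_1,\dots,X_n$ becomes precisely the complete independence model $W_1 \indep W_2 \indep \cdots \indep W_q$ on the regrouped contingency table. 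This is exactly the reduction illustrated in Example \ref{ex:join_indep}, where $X_1 \indep \{X_2,X_3\}$ on a $2\times 3 \times 4$ table is identified with the complete independence model $X_1 \indep W$ on a $2 \times 12$ table.

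First I would make this identification of coordinate rings precise: the $p$-variables $p_{i_1\dots i_n}$ of the original table are reindexed as $p$-variables $p_{j_1 \dots j_q}$ of the regrouped $q$-way table, where each composite index $j_i$ ranges over the state space of $W_i$, and likewise for the $u$-variables. This reindexing is a ring isomorphism that carries $I_\M$ to the vanishing ideal of the complete independence model on the $W_i$, since the $2\times 2$ minors defining the Segre variety are invariant under the regrouping (only the row/column labels change). Crucially, the hyperplane arrangement $\H$ is preserved: the coordinate hyperplanes $V(p_{i_1\dots i_n})$ are permuted among themselves, and $V(p_+)$ is unchanged since the total sum $p_+$ is independent of how the indices are grouped. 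Because the likelihood correspondence is defined intrinsically in terms of $I_\M$, $\H$, and the log-likelihood function (Definition \ref{def:likelihoodcorrespondence}), and all three are preserved by this reindexing isomorphism, the likelihood ideal of the joint independence model is carried isomorphically to the likelihood ideal of the associated complete independence model.

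Next I would verify that the minimal sufficient statistics match up correctly under the reindexing. The marginal sums appended to form the matrices $M_i$ in the complete independence construction, namely the marginals over each $W_i$, correspond exactly to the minimal sufficient statistics of the joint independence model: the marginal $u_{+\cdots + j_i + \cdots +}$ over the composite variable $W_i$ equals the marginal of the original table over all variables in the block $P_i$. This is precisely what Example \ref{ex:join_indep} records, where the sufficient statistics $u_{i++}$ and $u_{+jk}$ become $u_{i+}$ and $u_{+i}$ after regrouping. Consequently the augmented matrices $M_i$ for the complete independence model on the $W_i$ pull back, under the reindexing isomorphism, to the analogous augmented matrices for the joint independence model.

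With these identifications in hand, the result follows immediately by transport of structure: Theorem \ref{thm:LCindependence} asserts that $\sum_{i=1}^q I_2(M_i)$ is the prime likelihood ideal of the complete independence model on $W_1,\dots,W_q$ and that its $2\times 2$ minors form a Gröbner basis under the stated lexicographic order. Applying the inverse reindexing isomorphism transports this conclusion verbatim to the joint independence model, where the $M_i$ are the augmented block matrices described above, and the term order is the induced lexicographic order on the regrouped variables. I expect the only genuine subtlety to be bookkeeping: confirming that the lexicographic term order on the regrouped variables is compatible with (i.e.\ pulls back from) a valid term order on the original variables, so that the Gröbner basis statement — not merely the generation statement — survives the identification. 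Since any ordering of the composite indices lifts to an ordering of the original indices, this compatibility holds, and the Gröbner basis property is preserved. An explicit instance of the entire reduction is carried out in Example \ref{ex:join_dep2}.
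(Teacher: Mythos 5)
Your proposal is correct and follows the same route as the paper: reduce via Lemma \ref{lemma:jointiscomplete} to a complete independence model on the composite variables $W_i$ and transport the conclusion of Theorem \ref{thm:LCindependence} through the reindexing. The paper's own proof is just this reduction stated in two sentences; your additional bookkeeping (preservation of $\H$, matching of sufficient statistics, compatibility of the term order) fills in details the paper leaves implicit.
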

\begin{proof}
    Let $\M$ be a joint independence model. By Lemma \ref{lemma:jointiscomplete}, $\M$ is isomorphic to a product of projective spaces under the Segre embedding and therefore is isomorphic to a complete independence model on fewer variables by Lemma \ref{lemma:independencesegreideal}. 
\end{proof}

\section{Applications and Examples}\label{sec:Applications}

In this section, we demonstrate algorithmic advantages afforded by our main results and give several examples of how these results may be used in practice. At the end, we include an example that demonstrates the challenges of generalizing our method to more general classes of models. The following example organizes the computation times of the various algorithms introduced in this paper for the complete independence model on $2$-way and $3$-way contingency tables.

\begin{example} \label{ex:computationtime}
The following tables organize the computation times for $2$-way and $3$-way complete independence models using Theorem \ref{thm:LCindependence} (labeled I) and Theorem \ref{thm:LCtoric} (labeled T). Our implementation of method T is made faster by saturating only by $p_+$ and the output was verified for correctness separately. The code used was written in \textit{Macaulay2}~\cite{M2} and all computations were done using a cluster node equipped with 2 Intel Xeon Gold 6130 CPUs with 192 GB of memory.

\begin{table}[H]\scalebox{0.9}{
\begin{tabular}{|c|c|c|c|c|c|c|}
\hline
\multicolumn{7}{|c|}{$i \times j$ computation times (s)}\\\hline\hline
&2&3&4&5&6&\text{Method}\\
 \hline
 \multirow{2}{*}{{2}}&{.0042050}&{.0026273}&{.0030742}&{.0037202}&{.0042469}&I\\\cline{7-7}
     &{.0126488}&{.0121614}&{.019614}&{.0155284}&{.0170114}&T\\\hline
     \multirow{2}{*}{{3}}&{}&{.0030728}&{.0038790}&{.0052260}&{.0058382}&I\\\cline{7-7}
     &{}&{.0142195}&{.0172784}&{.0271773}&{.0276863}&T\\\hline
     \multirow{2}{*}{{4}}&{}&{}&{.0049143}&{.0064623}&{.0142544}&I\\\cline{7-7}
     &{}&{}&{.0198774}&{.0350445}&{.0484354}&T\\\hline
     \multirow{2}{*}{{5}}&{}&{}&{}&{.0134698}&{.0167110}&I\\\cline{7-7}
     &{}&{}&{}&{.0443950}&{.0581353}&T\\\hline
     \multirow{2}{*}{{6}}&{}&{}&{}&{}&{.0233014}&I\\\cline{7-7}
     &{}&{}&{}&{}&{.0864135}&T\\\hline
     \end{tabular}}
     \caption*{}
     \label{tab:sdf}
 \end{table}
\vspace*{-1cm}
\begin{table}[H]
\scalebox{0.9}{
\begin{tabular}{|c|c|c|c|c|c|c|c|}
\hline
\multicolumn{8}{|c|}{$2\times j \times k$ computation times (s)}\\\hline\hline
\multicolumn{2}{|c|}{}&2&3&4&5&6&\text{Method}\\
 \hline
 \multirow{10}{*}{{2}}&\multirow{2}{*}{{2}}&{.0055735}&{.0053818}&{.0068220}&{.0157749}&{.0162699}&I\\\cline{8-8}
     &&{.0313907}&{.0669933}&{.0582959}&{.0984729}&{.1559380}&T\\\cline{2-8}
     &\multirow{2}{*}{{3}}&{}&{.0119334}&{.0099675}&{.0280001}&{.0310153}&I\\\cline{8-8}
     &&{}&{.0659359}&{.1769630}&{.4047150}&{1.56129}&T\\\cline{2-8}
     &\multirow{2}{*}{{4}}&{}&{}&{.0241910}&{.0362822}&{.0538377}&I\\\cline{8-8}
     &&{}&{}&{.571813}&{2.20428}&{8.87356}&T\\\cline{2-8}
     &\multirow{2}{*}{{5}}&{}&{}&{}&{.0485734}&{.0844884}&I\\\cline{8-8}
     &&{}&{}&{}&{13.6736}&{14.4752}&T\\\cline{2-8}
     &\multirow{2}{*}{{6}}&{}&{}&{}&{}&{.1191160}&I\\\cline{8-8}
     &&{}&{}&{}&{}&{44.5459}&T\\\hline
     \end{tabular}}
     \caption*{}
     \label{tab:sdf}
 \end{table}
 \vspace*{-1cm}
 
 \noindent In comparison, the Lagrange method from \S\ref{sec:LikelihoodGeometry} took 120.638 seconds to compute just the $2\times 2$ complete independence model. We note that for both the $2 \times 3$ and the $3 \times 3$ models the Lagrange method did not terminate after several days. The method provided by Theorem \ref{thm:LCindependence} outperforms the method offered by Theorem \ref{thm:LCtoric} drastically for complete independence models on $2\times j \times k$ contingency tables.
\end{example}

\begin{example}\label{ex:join_dep2}
   Consider the joint independence model represented by the graph $\mathcal{G}$ in Example \ref{ex:join_indep} on $2 \times 3 \times 4$ contingency tables corresponding to random variables $X_1$, $X_2$, and $X_3$.  We get a matrix $M_{C}$ for each clique $C$ of $\mathcal{G}$:
    \[M_{\{X_1\}}=\begin{pmatrix}
        p_{111} & p_{112} & p_{113} & p_{114} & p_{121} & p_{122} & p_{123} & p_{124} & p_{131} & p_{132} & p_{133} & p_{134} & u_{1++}\\
        p_{211} & p_{212} & p_{213} & p_{214} & p_{221} & p_{222} & p_{223} & p_{224} & p_{231} & p_{232} & p_{233} & p_{234} & u_{2++}\\
    \end{pmatrix}\]
    \[M_{\{X_2,X_3\}}=\begin{pmatrix}
        p_{111} &p_{211} & u_{+11} \\
        p_{112} &p_{212} & u_{+12}\\
        p_{113} &p_{213} & u_{+13}\\
        p_{114} &p_{214} & u_{+14}\\
        p_{121} &p_{221} & u_{+21}\\
        p_{122} &p_{222} & u_{+22}\\
        p_{123} &p_{223} & u_{+23}\\
        p_{124} &p_{224} & u_{+24}\\
        p_{131} &p_{231} & u_{+31}\\
        p_{132} &p_{232} & u_{+32}\\
        p_{133} &p_{233} & u_{+33}\\
        p_{134} &p_{234} & u_{+34}\\
    \end{pmatrix}.\]
    The likelihood ideal for the joint independence model of $\mathcal{G}$ is generated by the $2\times 2$ minors of these matrices.
\end{example}

\begin{example}\label{ex:HardyWeinberg2}
   We revisit the Hardy--Weinberg curve from Example~\ref{ex:HardyWeinberg} using  Theorem ~\ref{thm:LCtoric}. Up to coordinate rescaling, as a toric model, it is represented by the following matrix:
    \[ A = \begin{pmatrix}
        0 & 1 & 2 \\
        2 & 1 & 0\\
    \end{pmatrix}. \]
    In this case, the product $AM$ is 
    \[AM = \begin{pmatrix}
        0 & 1 & 2 \\
        2 & 1 & 0\\
    \end{pmatrix}\begin{pmatrix}
        u_0 & p_0\\
        u_1 & p_1\\
        u_2 & p_2
    \end{pmatrix} = \begin{pmatrix}
        u_1+2u_2 & p_1+2p_2\\
        2u_0+u_1 & 2p_0+p_1\\
    \end{pmatrix}.\]
    By Theorem \ref{thm:LCtoric}, the ideal 
    \[\langle 4p_0p_2 - p_1^2, (u_1+2u_2)(2p_0+p_1)-(2u_0+u_1)(p_1+2p_2) \rangle :\langle p_0\cdots p_np_+\rangle^\infty\]
    is the likelihood ideal.
\end{example}

\begin{example}
    Consider the 3-chain undirected graphical model $\mathcal{G}$ on binary random variables $X_1$, $X_2$, and $X_3$ as in Example \ref{ex:3chain}. This represents the conditional independence of $X_1$ and $X_3$ given $X_2$, i.e. $X_1 \indep X_3 \, |\, X_2$. Using Theorem \ref{thm:LCtoric}, we can compute that the likelihood ideal has 19 quadric generators and has degree 20. We can attempt to apply the same method from Theorem \ref{thm:LCindependence} directly as in Example \ref{ex:join_dep2} by computing matrices for each clique $\{X_1,X_2\}$ and $\{X_2,X_3\}$ of $\mathcal{G}$:
    \[M_{\{X_1,X_2\}} = \begin{pmatrix}
        p_{111} & p_{112} & u_{11+}\\
        p_{121} & p_{122} & u_{12+}\\
        p_{211} & p_{212} & u_{21+}\\
        p_{221} & p_{222} & u_{22+}
    \end{pmatrix} \hspace{0.2in} M_{\{X_2,X_3\}} = \begin{pmatrix}
        p_{111} & p_{211} & u_{+11}\\
        p_{112} & p_{212} & u_{+12}\\
        p_{121} & p_{221} & u_{+21}\\
        p_{122} & p_{222} & u_{+22}
    \end{pmatrix}. \]

    \noindent Of course, the $2\times 2$ minors of these is \textit{not} the likelihood ideal --  the ideal of $2\times 2$ minors of the first two columns of each matrix do not even cut out the model. For conditional independence models like this, however, we have a fix: After fixing a value for $X_2$, $X_1$ and $X_3$ are completely independent. This is expressed by the $2\times 2$ minors of the matrices we get by spliting up each matrix above into two cases according to the outcome of the central vertex  $X_2$:

    \[M_{\{X_1,1\}}=\begin{pmatrix}
        p_{111}&p_{112}&u_{11+}\\
      p_{211}&p_{212}&u_{21+}\\
      \end{pmatrix}\hspace{0.2in}
      M_{\{X_1,2\}}=\begin{pmatrix}
      p_{121}&p_{122}&u_{12+}\\
      p_{221}&p_{222}&u_{22+}
    \end{pmatrix}\]
    \hspace{13.8cm} .
    \[M_{\{X_3,1\}}=\begin{pmatrix}
        p_{111}&p_{211}&u_{+11}\\
      p_{112}&p_{212}&u_{+12}\\
      \end{pmatrix}\hspace{0.2in}
      M_{\{X_3,2\}}=\begin{pmatrix}
      p_{121}&p_{221}&u_{+21}\\
      p_{122}&p_{222}&u_{+22}
    \end{pmatrix}\] 

\noindent  
           This is nearly the likelihood ideal, although it misses the following quadrics which fill in the relations between the cases $X_1 = 1$ and $X_2 = 2$:
           \[ p_{21+}u_{22+} - p_{22+}u_{21+},\hspace{0.1in}  p_{21+}u_{+22} - p_{+22}u_{21+}, \hspace{0.1in}  p_{21+}u_{+2+} - p_{+2+}u_{21+} , \] \[p_{+12}u_{22+} - p_{22+}u_{+12},\hspace{0.1in}  p_{+12}u_{+22} - p_{+22}u_{+12},\hspace{0.1in}  p_{+12}u_{+2+} - p_{+2+}u_{+12} , \]\[p_{+1+}u_{22+} - p_{22+}u_{+1+},\hspace{0.1in}  p_{+1+}u_{+22} - p_{+22}u_{+1+}, \hspace{0.1in} p_{+1+}u_{+2+} - p_{+2+}u_{+1+}.  \]
            \noindent These quadrics are exactly the generators of $I_2(AM)$ (from Proposition~\ref{thm:LCtoric}) that are not already contained in the ideal $I_2(M_{\{X_1,1\}}) + I_2(M_{\{X_1,2\}}) + I_2(M_{\{X_3,1\}}) + I_2(M_{\{X_3,2\}})$.
\end{example}

For more general log--linear models, there does not appear to be a similar way to reconcile the methods in Theorem \ref{thm:LCindependence} and Corollary \ref{cor:LCjointindependence}.

\begin{example}
    Consider a $3$-way contingency table consisting of binary random variables $X_1,X_2,$ and $X_3$. The \textit{no $3$-way interaction model} is a log--linear (but not undirected graphical) model given by the generators $\{\{X_1,X_2\},\{X_1,X_3\},\{X_2,X_3\}\}$ and $A$ matrix:
\begin{equation*}
        A=\begin{pmatrix}
        1 & 1&0&0&0&0&0&0 \\
        0 & 0&1&1&0&0&0&0 \\
        0 & 0&0&0&1&1&0&0 \\
        0 & 0&0&0&0&0&1&1\\
        1 & 0&1&0&0&0&0&0\\
        0 & 1&0&1&0&0&0&0\\
        0 & 0&0&0&1&0&1&0\\
        0 & 0&0&0&0&1&0&1\\
        1 & 0&0&0&1&0&0&0\\
        0 & 1&0&0&0&1&0&0\\
        0 & 0&1&0&0&0&1&0\\
        0 & 0&0&1&0&0&0&1
    \end{pmatrix}.
    \end{equation*}
    If one attempts to break up the matrices as in Example \ref{ex:3chain}, the $2 \times 2$ minors produce an ideal that contains $I_A = \langle p_{111}p_{122}p_{212}p_{221} - p_{112}p_{121}p_{211}p_{222} \rangle$, but is neither contained in nor contains the likelihood ideal. 
    
    The ideal $I_A+I_2(AM)$ from Theorem \ref{thm:LCtoric} is contained in the likelihood ideal but is not prime. Saturating by $\langle p_0\dots p_np_+\rangle^\infty$ fixes this by introducing a single quartic:
    \begin{multline*}
        p_{222}^{3}u_{11+} + p_{222}^2(p_{112}(u_{121}-u_{222})-p_{111}u_{+22})\\
        + p_{222}(p_{112}p_{121}(u_{211}-u_{222}) + p_{111}p_{122}(u_{2+2})) - p_{111}p_{122}p_{212}u_{22+}. \qedhere
    \end{multline*}
\end{example}
\subsection*{Acknowledgement}
Thanks to Daniel Erman and Rudy Yoshida for invaluable discussions and to Bernd Sturmfels for his mentorship during this project. Thanks also to Ivan Aidun, Lisa Seccia, Patricia Klein, and Serkan Hoşten for their thoughts. Part of this research was performed while the authors were visiting the Institute for Mathematical and Statistical Innovation (IMSI), which is supported by the National Science Foundation (Grant No. DMS-1929348). Some of the computations used for this research were carried out on the Texas A\&M University Department of Mathematics Whistler cluster, access to which is supported by the National Science Foundation (Grant No. DMS-2201005). Cobb acknowledges the support of the National Science Foundation Grant DMS-2402199.  Faust is supported by DMS-2052519.

\bibliography{refs}{}

\end{document}